\newtheorem{thm}{Theorem}
\newtheorem{prop}[thm]{Proposition \!\!}
\newtheorem{cor}[thm]{Corollary \!\!}
\newtheorem{lem}[thm]{Lemma \!\!}
\newcommand{\bbz}{\mathbb{Z}}
\newcommand{\bbn}{\mathbb{N}}
\newcommand{\bbq}{\mathbb{Q}}
\newcommand{\bbp}{\mathbb{P}}
\newcommand{\bbc}{\mathbb{C}}
\newcommand{\bbf}{\mathbb{F}}
\newcommand{\age}{\mathrm{age}}
\newcommand{\Lie}{\mathrm{Lie}}
\newcommand{\Aut}{\mathrm{Aut}}
\newcommand{\Jac}{\mathrm{Jac}}
\newcommand{\PSL}{\mathrm{PSL}}
\newcommand{\mat}{\begin{pmatrix}}
\newcommand{\emat}{\end{pmatrix}}
\author[Bo-Hae Im]{Bo-Hae Im}
\address{Department of Mathematics, Chung-Ang University, 221, Heukseok-dong, Dongjak-gu, Seoul, 156-756, South Korea}
\email{bohaeim@gmail.com, imbh@cau.ac.kr}
\author[Michael Larsen]{Michael Larsen}
\address{Department of Mathematics, Indiana University, Bloomington,
Indiana 47405, USA}
\email{mjlarsen@indiana.edu}
\subjclass[2000]{14K05}
\date{\today}
\keywords{}
\thanks{Bo-Hae Im was partially supported by the National Research Foundation of Korea Grant funded by the Korean Government(MEST) (NRF-2011-0015557). Michael Larsen was partially supported by NSF grant DMS-1101424.}
\begin{document}

\thispagestyle{empty}
\title[Rational curves on $A/G$]
{Rational curves on quotients of abelian varieties by  finite groups}

\begin{abstract} In \cite{KL}, it is proved that the quotient of an abelian variety $A$ by a finite order automorphism $g$ is uniruled
if and only if some power of $g$ satisfies a numerical condition $0<\age(g^k)<1$.  In this paper, we show that $\age(g^k)=1$ is enough to
guarantee that $A/\langle g\rangle$ has at least one rational curve.
\end{abstract}

 \maketitle

\section{Introduction}

Let $G$ be a finite group of automorphisms of an abelian variety $A/\bbc$.  It is a classical result \cite[II. \S1]{La} that $A$ itself cannot contain a rational curve.
For $|G| > 1$, there may or may not be rational curves on $A/G$. For general abelian varieties, $\Aut(A)= \pm1$, and Pirola proved \cite{P} that for $A$ sufficiently general and of dimension at least three, $A/\pm1$ has no rational curves. At the other extreme, regarding $A=E^n$ as the set of $n+1$-tuples of points on the elliptic curve $E$ which sum to $0$, $A/S_{n+1}$ can be interpreted as the set of effective divisors linearly equivalent to $(n+1)[0]$ and, as such, is just $\bbp^n$. More generally, Looijenga has shown  \cite{L} that the quotient of $E^n$ by the Weyl group of a root system of rank $n$ is a weighted projective space.

Rational curves on $A/G$ over a field $K$ are potentially a source of rational points over $G$-extensions of $K$.  For instance, the method \cite{Larsen} for finding pairs $a,b\in \bbq^\times$ such that the quadratic twists
$E_a$, $E_b$, and $E_{ab}$ all have positive rank amounts to finding a rational curve on $E^3/(\bbz/2\bbz)^2$.   Likewise, the theorem of Looijenga cited above gives for each elliptic curve $E$ over a number field $K$ and for each Weyl group $W$, a source of $W$-extensions $L_i$ of $K$ such that the representation of $W$ on each $E(L_i)\otimes\bbq$ contains the reflection representation.  On the other hand, the result of Pirola cited above dims the hope of using geometric methods to show that every abelian variety over a number field $K$ gains rank over infinitely many quadratic extensions of $K$.  Thus, it is desirable from the viewpoint of arithmetic to understand when $A/G$ can be expected to have a rational curve over a given field $K$, and to begin with, one would like to know when $A/G$ has a rational curve over $\bbc$.

Any automorphism $g$ of an abelian variety $A$ defines an invertible linear transformation (also denoted $g$) on $\Lie(A)$.
If $g$ is of finite order, there exists a unique sequence of rationals $0\le x_1\le x_2\le \cdots\le x_n < 1$ such that
the eigenvalues of $g$ are $e(x_1),\ldots,e(x_n)$, where $e(x) := e^{2\pi i x}$.  We say $g$ is of \emph{type} $(x_1,\ldots,x_n)$. Following
Koll\'ar and Larsen \cite{KL}, we write $\age(g) = x_1+\cdots+x_n$.
For instance, $\age(g) = 1/2$ for every reflection $g$.  The main result of \cite{KL} asserts that
$A/G$ is uniruled if and only if $0<\age(g)<1$ for some $g\in G$.  In this paper, we prove that to find a single rational curve in $A/G$, it sufices that $\age(g) \le 1$.

Since we need only consider the case $\age(g) = 1$, we first classify all types of weight $1$.  This requires a combinatorial analysis, which we carried out using a computer algebra system to minimize the risk of an oversight.
There are thirty-five cases (see Table 2 below), and our strategy for finding rational curves depends on case analysis.  Abelian surfaces play a special role, since here we can use known results on K3 surfaces.
The other key  idea is to find a non-singular projective curve $X$ on which $G$ acts with quotient
$\bbp^1$ and a $G$-equivariant map from $X$ to $A$, or, equivalently, a $G$-homomorphism from the Jacobian variety of $X$ to $A$.

We would like to thank Yuri Tschinkel and Alessio Corti for helpful comments on earlier versions of this paper.

\section{Classifying types}

If $A = V/\Lambda$, then the Hodge decomposition $\Lambda\otimes\bbc \cong V\oplus \bar V$ respects the action of $\Aut(A)$.
Therefore, if $g$ is of finite order with eigenvalues $e(x_1),\ldots,e(x_n)$, then the multiset
\begin{equation}
\tag{$\ast$}\{e(x_1),\ldots,e(x_n),e(-x_1),\ldots,e(-x_n)\}
\end{equation}
is $\Aut(\bbc)$-stable.
By a \emph{type}, we mean a multiset $\{x_1,\ldots,x_n\}$ with $x_i\in [0,1)$ such that the multiset ($\ast$) is $\Aut(\bbc)$-stable.
Equivalently, a type can be identified with a finitely supported function $f\colon \bbq/\bbz\to \bbn$ such that $f(x)+f(-x)$ depends only on the order of $x$ in $\bbq/\bbz$.
By the \emph{weight} of $\{x_1,\ldots,x_n\}$, we mean the sum $x_1+\cdots+x_n$, so that $\age(g)$ is the weight of the type of $g$.

A type is \emph{reduced} if $0$ does not appear, and the reduced type of a given type is obtained by discarding all copies of $0$.
The \emph{sum} of types is the union in the sense of multisets; at the level of associated functions on $\bbq/\bbz$ it is the usual sum.
A type which is not the sum of non-zero types is \emph{primitive}.
All the elements of a primitive type appear with multiplicity one, and they all have the same denominator.
Every type can be realized (not necessarily uniquely) as a sum of primitive types; if the weight of the type is $1$, each of the primitive types has weight $\le 1$,
so our first task is to classify primitive types with weight $\le 1$.

A primitive type $X$ of denominator $n\ge 2$ consists of fractions $a_i/n$ where $0<a_i<n$, and $(a_i,n)=1$.
Moveover, if $n\ge 3$, for each positive integer $a<n$ prime to $n$, exactly one of $a/n$ and $1-a/n$ belongs to $X$.  If $a<b<n/2$ and $1-a/n\in X$,
then the weight of $X$ exceeds $1$ since either $b/n$ or $1-b/n$ belongs to $X$.
Thus, if $0<a<n/2$ and $1-a/n\in X$, then $a$ must be the largest integer in $(0,n/2)$ prime to $n$.

\begin{lem}\label{primineq} If $\phi(n) > 24$, then  $$\sum_{x\in S_n} \min(x,n-x) > 2n,$$ where $S_n$ is the set of  positive integers $< n$ and prime to $n$. Moreover, the largest integer $n$ such that $\phi(n)\le 24$ is $90$.
\end{lem}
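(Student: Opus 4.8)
The plan is to reduce the inequality to an \emph{exact} evaluation of the left-hand side and then to control a single small error term. Writing $\min(x,n-x)=\tfrac n2-\bigl|x-\tfrac n2\bigr|$, one gets
\[
\sum_{x\in S_n}\min(x,n-x)=\frac{n\phi(n)}2-\frac12\sum_{x\in S_n}|n-2x|,
\]
so everything comes down to evaluating $\sum_{x\in S_n}|n-2x|$.

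I would compute this by M\"obius inversion over the divisors of $n$. Removing the coprimality condition through $[\gcd(x,n)=1]=\sum_{d\mid\gcd(x,n)}\mu(d)$ reduces the sum to $\sum_{d\mid n}\mu(d)\,d\sum_{m=1}^{n/d-1}\bigl|\tfrac nd-2m\bigr|$, and the inner sum is elementary: $\sum_{m=1}^{M-1}|M-2m|=\tfrac{M^2}2-M+\tfrac12[M\ \mathrm{odd}]$. Substituting this and using $\sum_{d\mid n}\mu(d)/d=\phi(n)/n$ together with $\sum_{d\mid n}\mu(d)=0$ for $n>1$, the two leading pieces collapse and one is left with the clean identity
\[
\sum_{x\in S_n}\min(x,n-x)=\frac{n\phi(n)}4-\frac E2,\qquad E:=\frac12\sum_{\substack{d\mid n\\ n/d\ \mathrm{odd}}}\mu(d)\,d.
\]

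The crux is then to bound the correction $E$, and I would split on the $2$-adic valuation of $n$. The condition ``$n/d$ odd'' forces $2^{v_2(n)}\mid d$, so for squarefree $d$ the restricted sum is empty when $4\mid n$ (whence $E=0$), while otherwise it factors as a product over the odd primes dividing $n$; in every case one finds $|E|\le\prod_{p\mid n,\ p\,\mathrm{odd}}(p-1)\le\phi(n)$. This gives $\sum_{x\in S_n}\min(x,n-x)\ge\frac{n\phi(n)}4-\frac{\phi(n)}2=\frac{\phi(n)(n-2)}4$, and $\frac{\phi(n)(n-2)}4>2n$ is equivalent to $\phi(n)>\frac{8n}{n-2}$. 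Since $\frac{8n}{n-2}\le24$ for every $n\ge3$ (with equality only at $n=3$), the hypothesis $\phi(n)>24$ yields the inequality immediately; this is also precisely where the constant $24$ comes from, as $\tfrac{8n}{n-2}$ attains its maximum $24$ at the smallest admissible $n$.

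For the final assertion I would bound $n$ in terms of $\phi(n)$: from $\phi(n)>\sqrt n$ for $n>6$, the constraint $\phi(n)\le24$ forces $n<576$, and a direct (computer-assisted) search over this finite range shows the largest such $n$ is $90$, with $\phi(90)=1\cdot6\cdot4=24$. The main obstacle is the bookkeeping in the M\"obius evaluation---isolating the parity correction $E$ and verifying the uniform bound $|E|\le\phi(n)$ across the three cases $4\mid n$, $n\equiv2\pmod 4$, and $n$ odd---after which both the inequality and the threshold $24$ fall out of routine algebra.
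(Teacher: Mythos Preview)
Your argument is correct and complete, but it follows a genuinely different path from the paper's.

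The paper begins with the pointwise inequality $\min(x,n-x)>x(n-x)/n$, which reduces the problem to estimating $\sum_{x\in S_n}x^2$; that sum is then evaluated by M\"obius inversion as $\tfrac{\phi(n)n^2}{3}+(-1)^{d_n}\tfrac{\phi(f(n))n}{6}$, and the desired bound follows from $\phi(n)>12n/(n-1)$. You instead use the exact identity $\min(x,n-x)=\tfrac n2-\tfrac12|n-2x|$ and apply M\"obius inversion directly to $\sum|n-2x|$, obtaining the closed form $\tfrac{n\phi(n)}{4}-\tfrac E2$ with a parity correction $E$ that you bound by $\phi(n)$ via the factorisation $\sum_{d\mid m}\mu(d)d=\prod_{p\mid m}(1-p)$. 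Your route avoids the auxiliary inequality entirely and yields a sharper lower bound $\tfrac{\phi(n)(n-2)}{4}$; it also makes the threshold $24$ visible as the maximum of $8n/(n-2)$ over $n\ge 3$, whereas in the paper the appearance of $24$ is less transparent. For the second assertion, the paper bounds the prime divisors of $n$ by $23$ and the exponents individually, then finishes by case analysis; your use of $\phi(n)>\sqrt n$ for $n>6$ to confine the search to $n<576$ is cruder but perfectly adequate, and the remaining finite check is the same in spirit.
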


\begin{proof} Note that
$$\min(x,n-x) > \frac{x(n-x)}{n} = \frac{n^2-x^2-(n-x)^2}{2n}.$$
In order to prove the first statement, we want to prove if $\phi(n)>24$, then
$$\sum_{x\in S_n} \Big(n^2-x^2-(n-x)^2\Big) > 4n^2,$$ or equivalently,
$$\phi(n)n^2  - 2\sum_{x\in S_n} x^2-4n^2>0.$$
By  M\"{o}bius inversion, one can prove that
$$\sum_{x\in S_n} x^2 = \frac{\phi(n) n^2}{3} +(-1)^{d_n}\frac{\phi(f(n)) n}6,$$
where $f(n)$ denotes the larges squarefree divisor of $n$ and $d_n $ is the number of distinct prime divisors of $n$.  Thus, if $\phi(n) > 24$, then $\phi(n)>24\geq 12 \frac{n}{n-1}$, so $(n-1)\phi(n)-12n >0$ and since $\phi(f(n))\leq \phi(n)$,
\begin{align*}
\phi(n)n^2 - 2\sum_{x\in S_n} x^2-4n^2 &\geq \Big(\frac{\phi(n)}{3}-4\Big)n^2-\frac{\phi(n)n}{3} \\
&=\frac{n((n-1)\phi(n)-12n)}{3}>0,
\end{align*}
which is the desired inequality.

For the second statement, if $\phi(n)\le 24$ and $p$ is a prime factor of $n$, then $\phi(p)=p-1\leq\phi(n)\leq 24$. Hence $p\le 23$.  Writing
$$n=2^{n_2}3^{n_3}5^{n_5}7^{n_7}11^{n_{11}}13^{n_{13}}17^{n_{17}}19^{n_{19}}23^{n_{23}},$$
we have 
$$0\leq n_2\leq 5,0\leq n_3\leq 3,0\leq n_5\leq 2,$$
and $0\leq n_i\leq 1$ for $7\le i\le 23$.
Case analysis now shows $n\leq 90$.

\end{proof}

\begin{prop}\label{primlist} There are $28$  primitive types with weight $ \leq 1$:
\end{prop}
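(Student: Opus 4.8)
The plan is to combine Lemma~\ref{primineq} with the structural observations preceding it to reduce the classification to a finite, explicit check. First I would record that, for $n \ge 3$, among all primitive types of denominator $n$ the one of smallest weight is the \emph{minimal} type $X_n^0$ that selects $a/n$ for every $a$ with $0<a<n/2$ and $(a,n)=1$: any other primitive type of denominator $n$ is obtained from $X_n^0$ by replacing one or more fractions $a/n$ by their complements $1-a/n=(n-a)/n$, and since $a<n-a$ this strictly increases the weight. The weight of $X_n^0$ is
$$\frac{1}{n}\sum_{\substack{0<a<n/2\\(a,n)=1}} a = \frac{1}{2n}\sum_{x\in S_n}\min(x,n-x),$$
so Lemma~\ref{primineq} shows that this minimal weight already exceeds $1$ whenever $\phi(n)>24$. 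Hence no primitive type of denominator $n$ with $\phi(n)>24$ can have weight $\le 1$, and by the last assertion of the lemma every relevant denominator satisfies $n\le 90$.

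Next I would pin down the candidates for each of the finitely many remaining $n$. The computation just before the lemma shows that if a weight-$\le 1$ primitive type contains any large fraction $(n-a)/n$ with $0<a<n/2$, then $a$ must be the largest integer in $(0,n/2)$ coprime to $n$; call it $a^\ast$. Consequently, for each $n\ge 3$ there are at most two primitive types that can possibly have weight $\le 1$: the minimal type $X_n^0$, and the type $X_n^1$ obtained from it by swapping $a^\ast/n$ for $(n-a^\ast)/n$, whose weight exceeds that of $X_n^0$ by exactly $(n-2a^\ast)/n$. The denominator $n=2$ is handled separately and contributes the single self-complementary type $\{1/2\}$.

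It then remains to run through the denominators $n$ with $\phi(n)\le 24$ and, for each, test whether $X_n^0$ and $X_n^1$ have weight $\le 1$, counting those that do. This is a routine but lengthy arithmetic enumeration: for small $n$ both candidates survive and contribute two types each, for denominators such as $n=16,20,24$ only the minimal type survives, and for several $n$ with $\phi(n)\le 24$ but $n$ comparatively large (for instance $n=11,13,21,22$) even the minimal weight already exceeds $1$, so that $n$ contributes nothing. Summing the surviving candidates over all admissible $n$ yields the count $28$. The main obstacle is precisely the volume and error-proneness of this finite verification --- there are on the order of forty denominators to examine and each weight must be computed exactly --- which is why it is natural to confirm the tally with a computer algebra system, as the authors indicate in the introduction.
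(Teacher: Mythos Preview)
Your argument is correct and follows essentially the same route as the paper: the paper likewise bounds the minimal weight of a primitive type of denominator $n$ from below by $\frac{1}{2n}\sum_{x\in S_n}\min(x,n-x)$, invokes Lemma~\ref{primineq} to reduce to $n\le 90$, and then appeals to an exhaustive search. Your write-up is more explicit than the paper's in that you spell out, using the observation recorded just before Lemma~\ref{primineq}, that for each $n\ge 3$ only the two candidates $X_n^0$ and $X_n^1$ need be tested---a useful refinement, but not a different method.
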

\begin{center}
\begin{tabular}{|l|c|c|c|}
\hline
\#& n & \text{primitive types} &  \text{weight}\\\hline
1&  2 & 1/2 & 1/2\\\hline
2&  3 & 1/3 & 1/3\\
3&    & 2/3 & 2/3\\\hline
4&  4 & 1/4 & 1/4 \\
5&    & 3/4 & 3/4\\\hline
6&  5 & 1/5, 2/5 & 3/5 \\
7&    & 1/5, 3/5 & 4/5 \\\hline
8&  6 & 1/6 &1/6\\
9&    & 5/6 & 5/6\\\hline
10&  7 & 1/7, 2/7, 3/7 & 6/7\\
11&    & 1/7, 2/7, 4/7 & 1\\\hline
12&  8 & 1/8, 3/8 & 1/2 \\
13&    & 1/8, 5/8 & 3/4 \\\hline
14&  9 & 1/9, 2/9, 4/9 & 7/9 \\
15&    & 1/9, 2/9, 5/9 & 8/9\\\hline
16&  10 & 1/10, 3/10 & 2/5 \\
17&    & 1/10, 7/10 & 4/5 \\\hline
18&  12 & 1/12, 5/12 & 1/2\\
19&    & 1/12, 7/12 & 2/3\\\hline
20&  14 & 1/14, 3/14, 5/14 & 9/14\\
21&    & 1/14, 3/14, 9/14 & 13/14\\\hline
22&  15 & 1/15, 2/15, 4/15, 7/15 & 14/15\\
23&    &1/15,2/15, 4/15, 8/15 & 1\\\hline
24&  16 & 1/16, 3/16, 5/16, 7/16 & 1\\\hline
25&  18 & 1/18, 5/18, 7/18 & 13/18\\
26&    & 1/18, 5/18, 11/18 & 17/18 \\\hline
27&  20 & 1/20, 3/20, 7/20, 9/20 & 1\\\hline
28&  24 & 1/24, 5/24, 7/24, 11/24 & 1\\
  \hline
\end{tabular}
\vskip 4pt
Table 1
\end{center}
\begin{proof} For $n\ge 3$, the weight of a primitive type of denominator $n$ is at least
$$\sum_{\{x\in S_n\mid x<n/2\}}\frac xn \ge \frac 1{2n}\sum_{x\in S_n} \min(x,n-x).$$
By Lemma~\ref{primineq}, it suffices to carry out an exhaustive search  up to $n = 90$.
\end{proof}
\begin{lem} There are $35$ types with $\age ~~1$ of automorphisms given in Table 2 below.
\end{lem}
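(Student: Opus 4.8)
The plan is to reduce this classification to a finite combinatorial search over the primitive types already tabulated in Proposition~\ref{primlist}. First I would observe that every reduced type of weight $1$ is a sum of primitive types drawn from Table~1. Indeed, recall from the discussion above that every type is a sum of primitive types; since a reduced primitive type has strictly positive weight and weights add, a reduced type of weight $1$ can only be a sum of primitive types each of weight $\le 1$, that is, each occurring in Table~1. (A primitive summand of weight exactly $1$ must then occur alone.) Working with reduced types is forced here, since adjoining copies of $0$ changes the function $f\colon \bbq/\bbz\to\bbn$ without changing the age, so Table~2 must list reduced types. Thus it suffices to enumerate the multisets of the $28$ primitive types of Proposition~\ref{primlist} whose weights sum to $1$ and then record the distinct resulting types.

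Next I would bound and organize the search. The smallest positive weight appearing in Table~1 is $1/6$, so any admissible multiset has at most six members, which makes the enumeration finite and small. I would organize it by the number $k$ of primitive summands. For $k=1$ one takes the five primitive types of weight exactly $1$ (entries $11,23,24,27,28$). For $k\ge 2$ one solves $w_1+\cdots+w_k=1$ with $w_1\le\cdots\le w_k$ and each $w_i$ a weight $<1$ from Table~1, and then for each admissible weight-pattern one counts the realizations, noting that a given weight may be achieved by several primitive types (for instance weight $1/2$ by any of entries $1,12,18$, weight $2/3$ by entries $3$ or $19$, weight $3/4$ by entries $5$ or $13$). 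Carrying this out yields the weight-patterns $\tfrac12+\tfrac12$, $\tfrac13+\tfrac23$, $\tfrac14+\tfrac34$, $\tfrac25+\tfrac35$, $\tfrac16+\tfrac56$ for $k=2$; patterns such as $\tfrac16+\tfrac16+\tfrac23$, $\tfrac16+\tfrac13+\tfrac12$, $\tfrac14+\tfrac14+\tfrac12$, $\tfrac13+\tfrac13+\tfrac13$ for $k=3$; and so on down to the single pattern $\tfrac16\cdot 6$ for $k=6$.

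The delicate point, and the reason a computer algebra system is used, is that the decomposition into primitive types is \emph{not} unique, so I must count distinct types, i.e.\ distinct functions $f$, rather than distinct decompositions. Accordingly, for each admissible multiset I would form the pointwise sum of the indicator functions of its summands and then deduplicate. In the present range the supports of the relevant primitive types overlap only in tightly controlled ways (for example entries $18$ and $19$ share the element $1/12$ but cannot co-occur in a weight-$1$ sum, since $\tfrac12+\tfrac23>1$), and carrying out the count gives $5$ types for $k=1$, $12$ for $k=2$, $9$ for $k=3$, $6$ for $k=4$, $2$ for $k=5$, and $1$ for $k=6$, hence $35$ in all; these are recorded in Table~2. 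The hard part is not any single deduction but the exhaustive bookkeeping—verifying that the weight-patterns are enumerated completely and that the deduplication is correct—which is exactly what makes the machine check worthwhile.
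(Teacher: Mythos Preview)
Your proposal is correct and follows essentially the same approach as the paper: both reduce the problem to enumerating multisets of the primitive types in Table~1 whose weights sum to~$1$, with the paper packaging this enumeration as the coefficient of $x^1$ in the generating function $\prod_{i=1}^{28}(1-[a_i]x^{w_i})^{-1}$ (computed by \textsc{Maple}) while you organize the same enumeration by hand according to the number $k$ of primitive summands. Your explicit attention to possible non-uniqueness of primitive decompositions is a welcome refinement that the paper leaves implicit; in this weight range the decomposition is in fact unique, so the $35$ monomials the paper obtains already correspond to $35$ distinct types without further deduplication, matching your counts $5+12+9+6+2+1=35$.
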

\begin{center}
\begin{tabular}{|c|c|c|c|}
\hline
\#&  n & \text{types} &  \text{notes}\\\hline
1&  2 & 1/2, 1/2 & Prop.~\ref{kummer} \\\hline
2&  3 & 1/3, 1/3, 1/3 &  Th.~\ref{small-factor}\\
3&   & 1/3, 2/3 &  Prop.~\ref{cube}\\\hline
4&  4 & 1/4, 1/4, 1/4, 1/4 &    Th.~\ref{small-factor}\\
5&   & 1/4, 1/4, 2/4 & $g^2\to\#1$ \\
6&   & 1/4, 3/4 & $g^2\to\#1$  \\\hline
7&  6 & 1/6, 1/6, 1/6, 1/6, 1/6, 1/6 &    Th.~\ref{small-factor}\\
8&  & 1/6, 1/6, 1/6, 1/6, 2/6  & Th.~\ref{small-factor} \\
9&   & 1/6, 1/6, 1/6, 3/6  &  $g^2\to\#2$ \\
10&  & 1/6, 1/6,  4/6  &  $g^3\to\#1$ \\
11&    & 1/6, 5/6  &  $g^3\to\#1$ \\
12&   & 1/6, 2/6, 3/6  &  $g^3\to\#1$  \\
13&    & 1/6, 1/6,  2/6, 2/6  &  $g^3\to\#1$  \\\hline
14&   7 & 1/7, 2/7, 4/7 & Cor.~\ref{seven}\\\hline
15&   8 & 1/8, 2/8, 5/8  &   $g^4\to\#1$\\
16&   &  1/8, 3/8, 4/8 & $g^4\to\#1$ \\
17&   & 1/8,  1/8, 3/8, 3/8 & Th.~\ref{small-factor} \\
18&    & 1/8, 2/8, 2/8, 3/8 & $g^4\to\#1$  \\\hline
19&   10 &  1/10, 2/10, 3/10, 4/10 & $g^5\to\#1$ \\\hline
20&   12 & 4/12, 2/12, 1/12, 5/12& $g^6\to\#1$\\
21&    & 4/12, 3/12, 3/12, 2/12 &$g^6\to\#1$\\
22&    & 6/12, 1/12, 5/12 &$g^6\to\#1$\\
23&    & 3/12, 3/12, 1/12, 5/12 &$g^4\to \#3$\\
24&    & 2/12, 2/12, 2/12, 1/12, 5/12 & $g^6\to\#1$\\
25&    & 1/12, 1/12, 5/12, 5/12 & Th.~\ref{small-factor} \\
26&    & 4/12, 1/12, 7/12 & $g^6\to\#1$ \\
27&    & 2/12, 2/12, 1/12, 7/12 & $g^6\to\#1$ \\
28&    & 3/12, 3/12, 2/12, 2/12, 2/12 & $g^6\to\#1$\\\hline
29&   15 & 1/15, 2/15, 4/15, 8/15 & Cor.~\ref{fifteen}\\\hline
30&   16 & 1/16, 3/16, 5/16, 7/16 & Cor.~\ref{mult-one}\\ \hline
31&   20 & 1/20, 3/20, 7/20, 9/20 & Cor.~\ref{mult-one}\\\hline
32&   24 & 1/24, 5/24, 7/24, 11/24 & Cor.~\ref{mult-one}\\
33&    & 8/24, 4/24, 3/24, 9/24 & $g^{12}\to\#1$\\
34&    & 3/24, 9/24, 2/24, 10/24 & $g^{12}\to\#1$\\
35&    & 4/24, 4/24, 4/24, 3/24, 9/24 & $g^{12}\to\#1$ \\
  \hline
\end{tabular}
\vskip 4pt
Table 2
\end{center}

\begin{proof} Let $[a_i]$ be a formal variable representing the $i$th primitive type in Table $1$,  and let  $w_i$ denote the weight of the $i$th type.
A monomial $\prod a_i^{m_i}$ stands for a sum of primitive types in which the $i$th type appears $m_i$ times.
The g.c.d. of the denominators of the $w_i$  is $5040$. Let $y=x^{1/5040}$, so $(1-[a_i]x^{w_i})^{-1}$ is a power series in $y$ for every $i$.
By \textsc{Maple 13},   the coefficient of $y^{5040}$ in the product
$$\prod_{i=1}^{28} (1-[a_i]x^{w_i})^{-1}=\prod_{i=1}^{28} (1-[a_i]y^{w_i\cdot 5040})^{-1},$$
is
%
${{\it a1}}^{2}+{{\it a2}}^{3}+{\it a2}\,{\it a3}+{\it a4}\,{\it a5}+{
\it a8}\,{\it a9}+{\it a4}\,{\it a13}+{\it a6}\,{\it a16}+{{\it a4}}^{
2}{\it a1}+{\it a28}+{\it a27}+{\it a23}+{\it a24}+{\it a11}+{\it a19}
\,{\it a2}+{\it a19}\,{{\it a8}}^{2}+{\it a18}\,{\it a1}+{\it a18}\,{{
\it a4}}^{2}+{\it a18}\,{{\it a8}}^{3}+{\it a18}\,{\it a12}+{{\it a18}
}^{2}+{\it a12}\,{\it a1}+{\it a12}\,{{\it a4}}^{2}+{\it a12}\,{{\it
a8}}^{3}+{{\it a12}}^{2}+{{\it a8}}^{2}{{\it a2}}^{2}+{{\it a8}}^{2}{
\it a3}+{{\it a8}}^{3}{\it a1}+{{\it a8}}^{3}{{\it a4}}^{2}+{{\it a8}}
^{4}{\it a2}+{{\it a8}}^{6}+{{\it a4}}^{4}+{\it a18}\,{\it a8}\,{\it
a2}+{\it a12}\,{\it a8}\,{\it a2}+{\it a8}\,{\it a1}\,{\it a2}+{\it a8
}\,{\it a2}\,{{\it a4}}^{2}$.

Each monomial in this sum corresponds to an entry in Table 2.
\end{proof}

\section{Rational curves in $A/\langle g\rangle$}

In this section we explain how to find rational curves on $A/\langle g\rangle$ in each case in Table 2.

\begin{lem}
If $A/\langle g^n\rangle$ has a rational curve for some positive integer $n$, then $A/\langle g\rangle$ has a rational curve.
\end{lem}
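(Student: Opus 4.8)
The plan is to use the evident quotient morphism relating the two quotient varieties. Since $\langle g\rangle$ is cyclic, $\langle g^n\rangle$ is a normal subgroup and $Q := \langle g\rangle/\langle g^n\rangle$ is a finite cyclic group. Writing $B := A/\langle g^n\rangle$ and $C := A/\langle g\rangle$, the residual action of $Q$ on $B$ has quotient $C$, so there is a natural finite surjective morphism $\pi\colon B\to C$. First I would record this map and note that, being the quotient morphism for the action of a finite group on a projective variety, it is a finite morphism and in particular has finite fibres.

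Next, given a rational curve on $A/\langle g^n\rangle$, that is, a non-constant morphism $f\colon \bbp^1\to B$, I would form the composite $\pi\circ f\colon \bbp^1\to C$ and show it is still non-constant. If it were constant, then $f(\bbp^1)$ would lie in a single fibre $\pi^{-1}(c)$ of $\pi$; but this fibre is a finite set, whereas $f(\bbp^1)$ is infinite because $f$ is non-constant. This contradiction shows $\pi\circ f$ is non-constant, so it is a rational curve on $C = A/\langle g\rangle$, as desired.

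The argument is purely formal, and I do not anticipate a genuine obstacle. The only point requiring care is the assertion that $\pi$ is finite, or equivalently that a non-constant map from $\bbp^1$ cannot be contracted to a point by $\pi$. This is immediate once one observes that $\pi$ is the quotient by the finite group $Q$, so that its fibres are the finite $Q$-orbits; no dimension or ramification estimates are needed. The role of this lemma in the overall strategy is likewise clear: it reduces the analysis of Table 2 to those cases where a suitable power $g^k$ already lands in a previously handled type, which is exactly how the notation $g^k\to\#j$ is used in the table.
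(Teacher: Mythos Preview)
Your argument is correct and is exactly the paper's approach: the paper's one-line proof just says that the morphism $A/\langle g^n\rangle\to A/\langle g\rangle$ is finite, so the image of a rational curve is again a rational curve, which is precisely what you have spelled out in more detail.
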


\begin{proof}
The morphism $A/\langle g^n\rangle\to A/\langle g\rangle$ is finite, so the image of a rational curve is again a rational curve.
\end{proof}

\begin{prop}
\label{reduce}
Let $A$ be an abelian variety and $g$ an automorphism of finite order.  Suppose that for every abelian variety $B$ and finite-order automorphism $h\in \Aut(B)$
whose type is the reduced type of $g$, $B/\langle h\rangle$ has a rational curve.  Then $A/\langle g\rangle$ has a rational curve.
\end{prop}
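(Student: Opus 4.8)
The plan is to split off the part of $A$ on which $g$ acts with eigenvalue $1$, so that the complementary piece has type exactly the reduced type of $g$, and then to transport a rational curve back up. Write the type of $g$ as $(\underbrace{0,\dots,0}_{k},x_{k+1},\dots,x_n)$, so that $e(0)=1$ occurs with multiplicity $k$ among the eigenvalues of $g$ on $\Lie(A)$ and the reduced type is $(x_{k+1},\dots,x_n)$. Consider the homomorphism $1-g\colon A\to A$ defined by $a\mapsto a-g(a)$, and set $B:=(1-g)(A)$, the image abelian subvariety. First I would check that $B$ is $g$-stable: for $a\in A$ one has $g\big((1-g)(a)\big)=(1-g)\big(g(a)\big)\in B$. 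Hence $h:=g|_B$ is a well-defined finite-order automorphism of $B$.

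The next step is to identify the type of $h$. Since $g$ has finite order it is semisimple on $\Lie(A)$, which therefore decomposes as a direct sum of eigenspaces $E_\lambda$. The endomorphism of $\Lie(A)$ induced by $1-g$ is $1-g$ itself; it vanishes on $E_1$ and is invertible on every $E_\lambda$ with $\lambda\ne 1$, so $\Lie(B)=(1-g)\Lie(A)=\bigoplus_{\lambda\ne 1}E_\lambda$. Thus the eigenvalues of $h$ on $\Lie(B)$ are precisely $e(x_{k+1}),\dots,e(x_n)$, i.e. $h$ has type equal to the reduced type of $g$. By hypothesis, $B/\langle h\rangle$ then has a rational curve $C$.

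Finally I would push $C$ forward. The inclusion $B\hookrightarrow A$ followed by the quotient map $A\to A/\langle g\rangle$ factors through $B/\langle h\rangle$ because $B$ is $g$-stable, yielding a morphism $\iota\colon B/\langle h\rangle\to A/\langle g\rangle$. Two points of $B$ have the same image in $A/\langle g\rangle$ exactly when they lie in a common $\langle g\rangle$-orbit; since $B$ is $g$-stable, the $\langle g\rangle$-orbit of a point of $B$ is contained in $B$ and coincides with its $\langle h\rangle$-orbit, so $\iota$ is injective. As source and target are both projective (quotients of projective varieties by finite groups), an injective morphism is finite, and finite morphisms do not contract curves; hence $\iota(C)$ is again a rational curve in $A/\langle g\rangle$, exactly as in the preceding lemma.

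The point that most needs care is the computation of $\Lie(B)$ together with the semisimplicity of $g$, which is what guarantees that $B$ carries precisely the nontrivial eigenvalues and no stray copies of $1$, so that $h$ really has the reduced type; an equivalent but slightly longer route would average a polarization over $\langle g\rangle$ and take $B$ to be the orthogonal complement of the identity component of the fixed subgroup $A^g$. The only remaining subtlety is the finiteness of $\iota$, which is the same device already used to descend rational curves from $A/\langle g^n\rangle$ to $A/\langle g\rangle$.
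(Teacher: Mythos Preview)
Your proof is correct and follows essentially the same route as the paper: define $B=(1-g)(A)$, restrict $g$ to $h$ on $B$, verify that $h$ has the reduced type of $g$, and then use that $B/\langle h\rangle$ embeds into $A/\langle g\rangle$. You have supplied considerably more detail than the paper, which simply asserts $B/\langle h\rangle\subset A/\langle g\rangle$; your justification of the type of $h$ via semisimplicity and of the injectivity of $\iota$ is the natural way to flesh out those assertions.
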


\begin{proof}
Let $B$ denote the image of $1-g$ acting on $A$.  Then $B$ is an abelian subvariety of $A$, and $g$ restricts to an automorphism $h$ of $B$
whose type is the reduced type of $g$.  As $B/\langle h\rangle \subset A/\langle g\rangle$ has a rational curve, the same is true of $A/\langle g\rangle$.
\end{proof}

The following proposition is well known.

\begin{prop}\label{kummer}
If $A$ is an abelian surface, then $A/\pm 1$ has a rational curve.
\end{prop}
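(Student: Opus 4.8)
The plan is to realize $A/\pm 1$ as the classical Kummer surface and to exhibit a rational curve on it directly, as the image of a symmetric curve $C\subset A$ on which $[-1]$ restricts to an involution with rational quotient. The fixed locus of $[-1]$ on $A$ is the $2$-torsion $A[2]$, a set of $16$ points, and near each of them $[-1]$ acts as $-1$ on the tangent plane, so $A/\pm 1$ acquires sixteen nodes; but rather than resolve these and appeal to the K3 structure I would produce the curve downstairs.

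First I would reduce to the principally polarized case. Given any polarization of type $(d_1,d_2)$ on $A=V/\Lambda$, enlarging $\Lambda$ to the overlattice on which the associated alternating form becomes unimodular produces a principally polarized abelian surface together with an isogeny $A\to A'$; composing with the complementary isogeny then yields an isogeny $\psi\colon A'\to A$ \emph{from} a principally polarized surface \emph{onto} $A$. Since $\psi$ is a homomorphism it commutes with $[-1]$, so it descends to a surjection $\bar\psi\colon A'/\pm1\to A/\pm1$ with finite fibres; hence the image of any rational curve on $A'/\pm1$ is again a rational curve. It therefore suffices to treat principally polarized $A$.

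Next I would invoke the classical dichotomy for principally polarized abelian surfaces: either the polarization is decomposable, so that $A\cong E_1\times E_2$ with the $E_i$ elliptic, or it is indecomposable, in which case $A\cong\Jac(C)$ for a smooth curve $C$ of genus $2$. In the first case the symmetric curve $E_1\times\{0\}$ has genus $1$, and $[-1]$ restricts to the inversion involution with four fixed points, whose quotient is $\mathbb{P}^1$; its image in $A/\pm1$ is the desired rational curve. In the second case I would embed $C$ in $\Jac(C)$ by $p\mapsto[p-w]$ for a Weierstrass point $w$; using $2w\sim K_C$ one checks that this embedding intertwines the hyperelliptic involution $\iota$ on $C$ with $[-1]$ on $\Jac(C)$, so $C$ is symmetric and $C/\pm1=C/\langle\iota\rangle\cong\mathbb{P}^1$. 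Again the image in $A/\pm1$ is a rational curve.

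The two steps needing genuine care, and which I expect to be the main obstacles, are the reduction and the Jacobian identification. For the reduction one must watch the \emph{direction} of the isogeny: the overlattice construction naturally gives $A\to A'$, which transports rational curves the wrong way, so one must pass to the complementary isogeny $A'\to A$. For the Jacobian case the crucial input is that $[-1]$ on $\Jac(C)$ restricts to the hyperelliptic involution on $C$; this is exactly what forces the quotient of the genus-$2$ curve to be $\mathbb{P}^1$, the Riemann--Hurwitz count $2\cdot 2-2=2(2\cdot 0-2)+6$ reflecting the six Weierstrass points as the $2$-torsion fixed locus on $C$. (Note that $[-1]$ on any abelian surface automatically has type $(1/2,1/2)$, so this is precisely the case $\age=1$ recorded as entry $\#1$ of Table 2.)
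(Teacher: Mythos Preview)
Your argument is correct and takes a genuinely different, more elementary route than the paper's. The paper resolves the sixteen $A_1$ singularities of $A/\pm1$ to obtain a K3 surface of Picard number at least $16$ and then invokes the theorem of Bogomolov--Tschinkel \cite{BT} that such a surface carries infinitely many rational curves, almost all of which miss the exceptional locus and hence lie on $A/\pm1$. You instead reduce by an isogeny to a principally polarized surface, apply the classical dichotomy (product of elliptic curves or Jacobian of a genus-$2$ curve), and in each case exhibit an explicit $[-1]$-stable curve whose quotient is $\mathbb{P}^1$. Your proof is self-contained and avoids the K3 input entirely; the paper's is shorter on the page but imports a substantial external result, gaining in return infinitely many rational curves where only one is needed. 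The paper reuses the same Bogomolov--Tschinkel argument for the $(1/3,2/3)$ case in Proposition~\ref{cube}, so there is some economy in its choice; on the other hand your method is closer in spirit to the explicit curve-and-Jacobian constructions the paper itself employs later (Corollaries~\ref{mult-one}--\ref{fifteen}).
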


\begin{proof}
Resolving the $16$ singularities of $A/\pm 1$, we obtain a K3 surface with Picard number $\ge 16\ge 5$.  By work of Bogomolov and Tschinkel \cite{BT},  any such surface is either elliptic or has infinite automorphism
group and in either case has infinitely many rational curves, all but finitely many of which lie on $A/\pm 1$.
\end{proof}

Note that Proposition~\ref{kummer} covers not only case \#1 in Table 2 but twenty other cases as well, namely those (indicated in the ``notes'' column) for which the reduced type of some power of $g$ is $(1/2,1/2)$.

\begin{prop}\label{cube} Let $A\cong V/\Lambda$ be an abelian surface with an automorphism $g$ of type $(1/3,2/3)$.  Then $A/\langle g\rangle$ contains a rational curve.
\end{prop}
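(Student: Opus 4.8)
The plan is to mimic the Kummer argument of Proposition~\ref{kummer}: I will show that the minimal resolution of $A/\langle g\rangle$ is itself a K3 surface of large Picard number, and then invoke the Bogomolov--Tschinkel theorem. First I would record that $g$ acts on $V = \Lie(A)$ with eigenvalues $\zeta := e(1/3)$ and $\bar\zeta = e(2/3) = \zeta^{-1}$, so that $1$ is not an eigenvalue; hence $1-g$ is an isogeny and the fixed locus $A^g = \ker(1-g)$ is finite. Computing the degree $\det(1-g\mid\Lambda) = |(1-\zeta)(1-\bar\zeta)|^2 = 9$ shows there are exactly nine fixed points, and since $g$ is a group homomorphism its differential at each fixed point is $g|_V$, with eigenvalues $\zeta,\zeta^{-1}$. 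Thus each image point of $A/\langle g\rangle$ is the cyclic quotient singularity $\tfrac13(1,-1)$, i.e.\ an $A_2$ (Du Val) singularity.

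Next I would identify the resolution. Because $g$ acts on $\omega_A = \wedge^2 V^*$ by $\zeta^{-1}\cdot\bar\zeta^{-1} = \zeta^{-1}\zeta = 1$, the holomorphic two-form is $g$-invariant and descends; and since the singularities are canonical, the minimal resolution $\pi\colon S\to A/\langle g\rangle$ has $K_S$ trivial. The Hodge numbers of $S$ are then the $g$-invariants of $H^*(A)$: the space of invariant two-forms is one-dimensional, so $p_g(S)=1$, while $g$ has no invariant one-forms (its eigenvalues $\zeta^{-1},\bar\zeta^{-1}$ on $V^*$ are both $\ne 1$), so $q(S)=0$. With $K_S$ trivial, $q=0$, and $p_g=1$, the Enriques--Kodaira classification identifies $S$ as a K3 surface. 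As a check, the topological Euler characteristic works out to $\tfrac13(0-9)+9+9\cdot 2 = 24$, as it must for a K3.

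Finally I would bound the Picard number. Each of the nine $A_2$ points resolves to a chain of two $(-2)$-curves, and the resulting eighteen curves have negative-definite intersection form (a direct sum of nine rank-two blocks of determinant $3$), hence are linearly independent in $\mathrm{NS}(S)$; thus $\rho(S)\ge 18\ge 5$. Exactly as in Proposition~\ref{kummer}, Bogomolov--Tschinkel \cite{BT} then furnishes infinitely many rational curves on $S$, and all but the finitely many contained in the exceptional locus descend to rational curves on $A/\langle g\rangle$.

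The main obstacle is the identification of $S$ as a K3 surface: one must verify that the order-three quotient, like the classical Kummer quotient, acquires only canonical singularities and a trivial canonical class, so that resolving it lands back in the K3 world where \cite{BT} applies. A secondary point to get right is the local analysis pinning the fixed-point singularities down to type $A_2$, since this is precisely what guarantees a crepant resolution and supplies the eighteen exceptional curves driving the Picard-number bound. Once these two facts are in place, the argument runs entirely parallel to the $\pm1$ case.
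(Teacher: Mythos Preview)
Your proposal is correct and follows essentially the same route as the paper: count the nine fixed points via $\det(1-g\mid\Lambda)=9$, identify each quotient singularity as the Du Val $A_2$ singularity $\tfrac13(1,-1)$, observe that the minimal resolution is a K3 surface of Picard number $\ge 18$, and then apply Bogomolov--Tschinkel \cite{BT}. If anything, you supply more justification than the paper does (the $q=0$ computation, the Euler-characteristic check, and the explicit independence of the eighteen $(-2)$-curves), whereas the paper simply asserts $K_Y=f^*K_X=0$ and that $Y$ is K3 with $\rho\ge 18$.
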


\begin{proof}
Let $G=\langle g\rangle$ and $X=A/G$. Regarding  $1-g$  as an isogeny of $A$, the number of fixed points of $g$ is
$$\deg(1-g)=\#\ker(1-g)=\det(1-g|\Lambda )=3^2=9.$$
These are singularities of  type $A_2$, since under $(x,y)\mapsto(\omega x, \omega^2 y)$ where $\omega$ is a cube root of unity, the invariants are generated by $X=x^3, Y=y^3, Z=xy$, and so
    $$\bbc[[x,y]]^G = \bbc[[X,Y,Z]]/(XY-Z^3).$$
This is isomorphic to $\bbc[[x, y, z]]/(x^2 + y^2 + z^3)$,
    which has a Du Val singularity of type $A_2$ (see \cite[Ch.4,~4.2]{R}).

Consider the  minimal resolution $f: Y\rightarrow X$, for which the $9$ exceptional divisors $Y_i$ each consists of two projective lines $D_{i,1}$ and $D_{i,2}$ intersecting at one point.
The canonical divisor of $Y$ is $K_Y=f^*K_X=0$.  Hence $Y$ is a K$3$-surface of Picard number $\ge 18$, and again by \cite{BT}, we deduce that $X$ has infinitely many rational curves..
\end{proof}

\begin{thm}
\label{disjoint}
Let $B$ be an abelian variety and $h\in \Aut(B)$ an automorphism of finite order such that $h$ and $h^{-1}$ have disjoint types.
Let $A$ be an abelian variety with a finite order automorphism $g$ whose type is contained in that of $h$.
If there is a rational curve on $B/\langle h\rangle$ then there is a rational curve on $A/\langle g\rangle$.
\end{thm}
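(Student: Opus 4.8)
The plan is to exploit the reformulation announced in the introduction: a rational curve on a quotient $C/\langle\gamma\rangle$ is the same datum as a nonsingular projective curve $X$ with a $\langle\gamma\rangle$-action such that $X/\langle\gamma\rangle\cong\bbp^1$, together with a nonconstant $\langle\gamma\rangle$-equivariant morphism $X\to C$, equivalently a nonzero $\langle\gamma\rangle$-equivariant homomorphism $\Jac X\to C$. Set $n=\mathrm{ord}(h)$; since the eigenvalues of $g$ are among those of $h$ we have $\mathrm{ord}(g)\mid n$, so both $g$ and $h$ give actions of $\bbz/n\bbz$ (on $A$ through $\langle g\rangle$, on $B$ through $\langle h\rangle$). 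A rational curve on $B/\langle h\rangle$ thus furnishes such an $X$, an action of $\bbz/n\bbz$ with $X/(\bbz/n\bbz)\cong\bbp^1$, and a nonzero equivariant $\psi\colon\Jac X\to B$. I would reduce everything to producing a nonzero equivariant homomorphism $\Jac X\to A$: composing it with the Abel--Jacobi map $X\to\Jac X$ gives a nonconstant equivariant $X\to A$, and since $A\to A/\langle g\rangle$ is constant on $\bbz/n\bbz$-orbits the composite $X\to A/\langle g\rangle$ factors through $\bbp^1=X/(\bbz/n\bbz)$, producing the desired rational curve.

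Next I would determine the available homomorphisms. Write $T_h$ for the multiset of eigenvalues $e(x_i)$ of $h$ on $V_B=\Lie B$ and $\bar T_h$ for its complex conjugate, and similarly $T_g,\bar T_g$ for $g$ on $V_A=\Lie A$. First note that $g$ and $g^{-1}$ also have disjoint types: if $y,y'$ lay in the type of $g$ with $y=1-y'$, then $y$ would belong both to the type of $h$ and, through $y'$, to the type of $h^{-1}$, contradicting the hypothesis. Because each type is disjoint from that of its inverse, the Hodge decomposition $H_1(B,\bbc)=V_B\oplus\bar V_B$ is exactly the grouping of $h$-eigenspaces according to whether the eigenvalue lies in $T_h$ or in $\bar T_h$, and likewise for $A$. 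I claim every $\bbz/n\bbz$-equivariant $\bbq$-linear map $u\colon H_1(B,\bbq)\to H_1(A,\bbq)$ is automatically a morphism of Hodge structures. Indeed $u_{\bbc}$ carries the $\lambda$-eigenspace of $h$ into the $\lambda$-eigenspace of $g$; since $T_g\subseteq T_h$ and $T_h\cap\bar T_g\subseteq T_h\cap\bar T_h=\varnothing$, the subspace $V_B=\bigoplus_{\lambda\in T_h}H_1(B,\bbc)_\lambda$ is sent into $\bigoplus_{\lambda\in T_g}H_1(A,\bbc)_\lambda=V_A$, and the conjugate statement holds because $u$ is defined over $\bbq$. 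Hence, in the isogeny category, $\mathrm{Hom}_{\bbz/n\bbz}(B,A)\otimes\bbq$ is identified with the space of $\bbz/n\bbz$-module maps $H_1(B,\bbq)\to H_1(A,\bbq)$ (and symmetrically with the roles of $A$ and $B$ reversed).

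With this dictionary the containment of types becomes a statement about $\bbq[\bbz/n\bbz]$-modules: the eigenvalue multiset of $g$ on $H_1(A,\bbc)$ is contained in that of $h$ on $H_1(B,\bbc)$, so each rational irreducible $\bbz/n\bbz$-constituent occurs in $H_1(A,\bbq)$ with multiplicity at most its multiplicity in $H_1(B,\bbq)$. Thus $H_1(A,\bbq)$ embeds equivariantly into $H_1(B,\bbq)$, which translates into an equivariant isogeny of $A$ onto an abelian subvariety $A'\subseteq B$ with $(A',h|_{A'})$ isogenous to $(A,g)$. Since $\bbq[\bbz/n\bbz]$ is semisimple, this inclusion splits, giving an equivariant surjection $H_1(B,\bbq)\twoheadrightarrow H_1(A,\bbq)$ and hence an equivariant homomorphism $\beta\colon B\to A$ (up to isogeny) that restricts to an isogeny on $A'$. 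The candidate map is then $\beta\circ\psi\colon\Jac X\to A$.

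The main obstacle is precisely the nonvanishing of $\beta\circ\psi$. A fixed $\beta$ contracts the given rational curve exactly when $\psi(\Jac X)$ lies in the complementary factor $\ker\beta$ (whose type is the multiset difference of the types of $h$ and $g$), equivalently when $\Jac X$ shares no $\bbz/n\bbz$-constituent with $A$; one can indeed build examples of rational curves on $B/\langle h\rangle$ supported entirely on such a complementary factor, so the nonvanishing cannot be purely formal. I would control it through the curve $X$ itself: by the Chevalley--Weil formula the $\bbz/n\bbz$-module $H_1(X,\bbq)$ is governed by the ramification of $X\to\bbp^1$, and I would show that under our hypotheses the rational curve can be taken so that the isotypic component matching the type of $g$ genuinely occurs in $\Jac X$, making $\beta\circ\psi$ nonzero. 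Establishing this nonvanishing---ruling out that the curve we are handed is supported on the complementary factor---is the heart of the argument; the preceding Hodge-theoretic and linear-algebra steps are essentially formal once it is in hand.
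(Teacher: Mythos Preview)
Your Hodge-theoretic core --- that under the disjoint-types hypothesis every $\bbz/n\bbz$-equivariant $\bbq$-linear map $H_1(B,\bbq)\to H_1(A,\bbq)$ is automatically a morphism of Hodge structures, and hence (after clearing denominators) comes from an equivariant homomorphism of abelian varieties --- is exactly the paper's argument. But the paper packages it far more directly: it simply constructs an equivariant \emph{surjection} $p\colon B\to A$ and asserts that it suffices to push the given rational curve forward along the induced map $B/\langle h\rangle\to A/\langle g\rangle$. There is no passage through a covering curve $X$, its Jacobian, or Chevalley--Weil; all of that machinery is an unnecessary detour once you have $p$ (your $\beta$).

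The non-contraction issue you flag is real, and you are right that it is not purely formal: one can certainly produce rational curves on $B/\langle h\rangle$ that live entirely in a factor complementary to $A$ and are killed by $p$. The paper's proof does not address this point either; it stops after constructing $p$. Your proposed remedy, however, does not work as written. You are handed a single rational curve, not a family you may adjust, so ``taking'' the curve to have favorable ramification is not available; and even if Chevalley--Weil guarantees that the relevant isotypic piece of $\Jac X$ is nonzero, nothing forces $\psi$ to be nonzero on that piece --- $\psi(\Jac X)$ could still sit inside $\ker\beta$. In practice the paper only invokes the theorem in two situations where the difficulty evaporates: either the types of $g$ and $h$ coincide, so $p$ is an isogeny and contracts nothing (Theorem~\ref{small-factor}); or $B=\Jac(X)$ for an explicit curve $X$ and the rational curve is the Abel--Jacobi image of $X$ itself, which generates $B$ and hence cannot be collapsed by any surjection (Corollaries~\ref{mult-one}, \ref{seven}, \ref{fifteen}). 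So the honest summary is: your linear-algebra step matches the paper's, your added Jacobian layer is superfluous, and the gap you correctly spotted is not closed by your Chevalley--Weil plan --- it is handled in the paper's applications by the specific provenance of the rational curves, not by the general theorem.
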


\begin{proof}
It suffices to prove that there exists a surjective homomorphism $p\colon B\to A$ such that the diagram
$$\xymatrix{B\ar[r]^p\ar[d]_h&A\ar[d]^g \\ B\ar[r]^p&A}$$
commutes.  Writing $B = \Lie(B)/\Lambda_B$ and $A = \Lie(A)/\Lambda_A$, the goal is to find a surjective $\bbc[t]$-linear map $\phi\colon \Lie(B)\to \Lie(A)$
(where $t$ acts as $g$ on $\Lie(A)$ and as $h$ on $\Lie(B)$)
such that $\phi(\Lambda_B)\subset \Lambda_A$.  If $\psi$ is a surjective $\bbc[ t]$-linear map $\Lie(B)\to \Lie(A)$
such that $\psi(\Lambda_B\otimes\bbq) = \Lambda_A\otimes  \bbq$, then we can define $\phi := n\psi$ for $n$ a sufficiently divisible positive integer.  It suffices, therefore, to find $\psi$
with the desired properties.

As the type of $A$ is a subset of the type of $B$, there exists a surjective $\bbq[t]$-linear map $T\colon\Lambda_B\otimes\bbq\to \Lambda_A\otimes  \bbq$.
Extending scalars to $\bbc$, $T\otimes 1$ maps $\Lambda_B\otimes\bbc = \Lie(B) \oplus \overline{\Lie(B)}$ to $\Lambda_A\otimes \bbc = \Lie(A) \oplus \overline{\Lie(A)}$.
The type of $g$ acting on $\overline{\Lie(A)}$ is the same as the type of $g^{-1}$ acting on  $\Lie(A)$ and therefore disjoint from the type of $g$ acting on $\Lie(A)$, and the same is true for the type of $h$ acting on $\Lie(B)$.
As $\Lie(B)$ and $\Lie(A)$ are direct sums of certain $t$-eigenspaces of $\Lambda_B\otimes\bbc$ and $\Lambda_A\otimes\bbc$ respectively and
as the spectrum of $t$ acting on $\Lie(A)$ is the intersection of the spectra of $t$ acting on $\Lie(B)$ and on $\Lambda_A\otimes\bbc$, it follows that
$T\otimes 1$ maps $\Lie(B)$ to $\Lie(A)$.  The restriction of $\psi$ to $\Lie(B)$ is therefore the desired map.

\end{proof}

\begin{cor}
\label{mult-one}
Let $n\ge 3$ be a positive integer, and let $m = \lceil n/2\rceil - 1$.  If $A$ is an abelian variety and $g\in \Aut(A)$ is an automorphism of order $n$
whose type is contained in $\{1/n,2/n,\ldots,m/n\}$, then $A/\langle g\rangle$ has a rational curve.
\end{cor}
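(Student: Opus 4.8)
The plan is to realize the hypotheses of Theorem~\ref{disjoint} with an essentially universal pair $(B,h)$ of type $\{1/n,\ldots,m/n\}$, built from a cyclic cover of the line. I would take $X$ to be the smooth projective model of the superelliptic curve
$$ y^n = x(1-x), $$
equipped with the order-$n$ automorphism $\sigma\colon (x,y)\mapsto (x,\zeta y)$ for a primitive $n$th root of unity $\zeta$, and set $B=\Jac(X)$ with $h=\sigma_*$. Granting for the moment that, after possibly replacing $\sigma$ by $\sigma^{-1}$, the automorphism $h$ has type exactly $\{1/n,2/n,\ldots,m/n\}$ and that $B/\langle h\rangle$ has a rational curve, the corollary follows quickly. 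Since $m=\lceil n/2\rceil-1<n/2$, the sets $\{1,\ldots,m\}$ and $\{n-m,\ldots,n-1\}$ are disjoint, so the types of $h$ and $h^{-1}$ are disjoint; and the type of $g$ is contained in $\{1/n,\ldots,m/n\}$ by hypothesis. If the type of $g$ has repeated values, one first replaces $B$ by a power $\Jac(X)^r$ with $h$ acting diagonally, which has the same lower-half type with multiplicity $r$ and for which $B/\langle h\rangle$ still carries a rational curve coming from a single factor. Theorem~\ref{disjoint} then produces a rational curve on $A/\langle g\rangle$.

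To produce the rational curve on $B/\langle h\rangle$, I would use the Abel--Jacobi map. The cover $X\to\bbp^1$ is totally ramified over $x=0$, so the point $p_0$ lying over it is fixed by $\sigma$; taking $p_0$ as base point, the embedding $\iota\colon X\hookrightarrow B$, $p\mapsto[p-p_0]$, intertwines the $\langle\sigma\rangle$-actions on $X$ and on $B$. Hence $\iota$ descends to a morphism $X/\langle\sigma\rangle\to B/\langle h\rangle$ with the same one-dimensional image. Now $X/\langle\sigma\rangle$ is exactly the base $\bbp^1$ of the cyclic cover, and since $n\ge 3$ we have $m\ge 1$, so $\iota$ is a nonconstant closed immersion; its image, and hence the image of the descended map, is a rational curve.

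The technical core, which I expect to be the main obstacle, is the type computation. The forms $dx/y^k$ are $\sigma^*$-eigenvectors, with $\sigma^*(dx/y^k)=\zeta^{-k}\,dx/y^k$, and a local analysis at the three branch points $0,1,\infty$ shows that $dx/y^k$ is holomorphic precisely for the integers $k$ with $n/2<k<n$; a Riemann--Hurwitz count gives genus $(n-\gcd(n,2))/2=m$, so there are exactly $\dim H^0(X,\Omega^1)=m$ such forms and they form a basis. Reading off the eigenvalues, the $\sigma^*$-spectrum on $H^0(X,\Omega^1)$ is $\{e((n-k)/n)\}=\{e(j/n):1\le j\le m\}$. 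Passing to $\Lie(B)=H^0(X,\Omega^1)^*$ under the Hodge identification then yields, for one generator of $\langle\sigma\rangle$, the lower half $\{1/n,\ldots,m/n\}$ and, for the other, the complementary upper half $\{(n-m)/n,\ldots,(n-1)/n\}$; choosing the former as $h$ completes the argument. The delicate points here are the ramification analysis at $x=\infty$, where both the number of points in the fiber and the ramification index depend on the parity of $n$ (via $\gcd(n,2)$), and keeping the eigenvalue bookkeeping consistent between $H^0(X,\Omega^1)$ and $\Lie(B)$ so that the correct generator is selected.
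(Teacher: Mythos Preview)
Your argument is correct and follows the same strategy as the paper: build $(B,h)$ from the Jacobian of a cyclic cover of $\bbp^1$ whose quotient by the cyclic group is $\bbp^1$, verify that $h$ has type $\{1/n,\dots,m/n\}$ disjoint from that of $h^{-1}$, and invoke Theorem~\ref{disjoint}. In fact your curve is the \emph{same} curve as the paper's, just in a different affine model: substituting $u=2x-1$ and rescaling $y$ turns $y^n=x(1-x)$ into $u^2=1-v^n$, which over $\bbc$ is isomorphic to the hyperelliptic curve $y^2=x^n-1$ used in the paper, and your $\sigma$ corresponds to the paper's $h\colon(x,y)\mapsto(e(1/n)x,y)$.

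The practical difference is only in bookkeeping. In the hyperelliptic model the basis $\{x^i\,dx/y\}_{i=0}^{m-1}$ of $H^0(X,\Omega_X)$ is standard and the eigenvalues $e((i+1)/n)$ are read off instantly, so the paper's proof is two lines. Your superelliptic model forces you through the local analysis at $0$, $1$, and $\infty$ (with the parity split at $\infty$) to identify which $dx/y^k$ are holomorphic---correct, but more work for the same conclusion. On the other hand, you are more careful than the paper on one point: you note that if the type of $g$ has repeated values one should pass to $\Jac(X)^r$ with the diagonal action before applying Theorem~\ref{disjoint}. The paper does not say this, though in its applications (cases \#30--32 of Table~2) all multiplicities are one, so the issue never arises there.
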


\begin{proof}
Let $X$ denote the non-singular projective hyperelliptic curve of genus $m$ which contains the affine curve $y^2 = x^n-1$.
The order-$n$ automorphism $h(x,y) = (e(1/n)x,y)$ extends to an automorphism of $X$ and therefore defines an automorphism of $B := \Jac(X)$.
The Lie algebra $\Lie(B)$ can be identified with the space $H^0(X,\Omega_X)$ of holomorphic differential forms on $X$, which has a basis
$$\Bigl\{\frac {dx}y, \frac{x\,dx}y,\ldots,\frac{x^{m-1}\,dx}y \Bigr\}.$$
Therefore, the type of $h$ acting on $B$ is $\{1/n,2/n,\ldots,m/n\}$, which is disjoint from the type of $h^{-1}$.
On the other hand, $B/\langle h\rangle$ contains the rational curve $X/\langle h\rangle$.
Thus, Theorem~\ref{disjoint} applies.
\end{proof}

\begin{cor}
\label{seven}
If $A$ is an abelian $3$-fold and $g$ is an automorphism of $A$ of type $(1/7,2/7,4/7)$, then $A/\langle g\rangle$ has a rational curve.
\end{cor}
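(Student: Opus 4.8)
The plan is to realize the type $(1/7,2/7,4/7)$ via a Jacobian construction analogous to Corollary~\ref{mult-one}, and then invoke Theorem~\ref{disjoint}. The crucial observation is that the type $\{1/7,2/7,4/7\}$ is \emph{not} of the form $\{1/n,2/n,\ldots,m/n\}$, so the plain hyperelliptic family from the previous corollary does not suffice. Instead I would look for a curve $X$ with an order-$7$ automorphism $h$ whose action on $H^0(X,\Omega_X)$ produces eigenvalue characters $e(1/7),e(2/7),e(4/7)$, with quotient $X/\langle h\rangle$ rational. The natural candidate is a superelliptic (Fermat-type or cyclic-cover) curve $y^7 = x^a(x-1)^b$ for suitable exponents, where the $\mathbb{Z}/7$-action is $h(x,y)=(x,e(1/7)y)$ and $X/\langle h\rangle \cong \bbp^1$ via the $x$-coordinate.

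First I would set $B = \Jac(X)$ and compute the $h$-eigenvalue decomposition of the space of holomorphic differentials. For a cyclic cover $y^7 = x^a(x-1)^b$ of $\bbp^1$ branched over $\{0,1,\infty\}$, the differentials $x^i(x-1)^j\,dx/y^k$ are eigenvectors for $h$ with character $e(k/7)$, and a standard count (via Riemann--Hurwitz, or via the explicit basis of differentials for such superelliptic curves) gives the multiplicity of each eigenvalue. The goal is to choose $a,b$ so that each of the characters $e(1/7),e(2/7),e(4/7)$ occurs and together they constitute the full type of $h$ on $\Lie(B)$; since $3$ eigenvalues are needed, the genus should be $3$, consistent with a suitable branched cover. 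Because $\{1,2,4\}$ are exactly the nonzero quadratic residues mod $7$, the relevant curve is very likely the quotient of the Klein quartic or, more directly, the curve with $y^7=x(x-1)$ type equation whose differential representation picks out precisely the quadratic residue characters.

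Once the type of $h$ on $B$ is shown to equal $\{1/7,2/7,4/7\}$, I must verify the hypothesis of Theorem~\ref{disjoint}, namely that $h$ and $h^{-1}$ have disjoint types. Here $h^{-1}$ acts with characters $e(-1/7),e(-2/7),e(-4/7) = e(6/7),e(5/7),e(3/7)$, which are disjoint from $\{e(1/7),e(2/7),e(4/7)\}$ since $\{1,2,4\}$ and $\{3,5,6\}$ partition the nonzero residues mod $7$. The rational curve on $B/\langle h\rangle$ is furnished by $X/\langle h\rangle \cong \bbp^1$, and since the type of $g$ on $A$ equals (hence is contained in) the type of $h$ on $B$, Theorem~\ref{disjoint} yields a rational curve on $A/\langle g\rangle$.

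The main obstacle I anticipate is identifying the right curve and confirming that its differentials realize exactly the quadratic-residue characters rather than all of $\{1/7,\ldots,6/7\}$ or some other distribution; this is a representation-theoretic computation that requires care in choosing the branching data $a,b$ (and checking that the genus is $3$ so that no extraneous eigenvalues appear). Once the correct superelliptic equation is pinned down, the disjointness of types is immediate from the residue partition, and the remainder is a direct application of the machinery already established.
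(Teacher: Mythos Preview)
Your strategy is sound and leads to a valid proof once the correct cyclic cover is pinned down: for instance, $y^7 = x(x-1)^2$ has genus $3$, and the holomorphic differentials $(x-1)\,dx/y^6$, $(x-1)\,dx/y^5$, $dx/y^3$ carry $h$-eigenvalues $e(1/7)$, $e(2/7)$, $e(4/7)$ respectively. (A warning: the naive guess $y^7 = x(x-1)$ gives type $\{1/7,2/7,3/7\}$, which is exactly the hyperelliptic type from Corollary~\ref{mult-one} and not what you want; the exponents really matter here.) After that, your disjointness check and the appeal to Theorem~\ref{disjoint} go through exactly as you describe.

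The paper takes a different route that sidesteps the differential computation entirely. It works with the Klein quartic $x^3y + y^3z + z^3x = 0$ in its plane model, with $h(x,y,z) = (\zeta_7 x, \zeta_7^4 y, \zeta_7^2 z)$, and observes that cyclic permutation of the coordinates conjugates $h$ to $h^2$ and to $h^4$ inside $\Aut(X) = \PSL_2(\bbf_7)$. This forces the type of $h$ on $H^0(X,\Omega_X)$ to be stable under multiplication by $2$ modulo $1$, hence equal to $\{1/7,2/7,4/7\}$ or $\{3/7,5/7,6/7\}$; replacing $h$ by $h^{-1}$ if necessary gives the former. Both arguments in fact land on the same curve (your cyclic model is a standard presentation of the Klein quartic), but the paper's group-theoretic shortcut trades the eigenvalue calculation you flag as the ``main obstacle'' for a one-line symmetry observation. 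Your approach, by contrast, is more self-contained and does not require knowing the full automorphism group of $X$ in advance.
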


\begin{proof}

Let $X$ denote the Klein quartic:
$$X : x^3y+y^3z+z^3x=0$$
and $B$ the Jacobian of $X$.
The self-map $h(x,y,z)=(\zeta_7x, \zeta_7^4y, \zeta_7^2z)$ of $X$ belongs to the automorphism group
$\PSL_2(\bbf_7)$ of $X$ which acts non-trivially on the Jacobian variety $B$ and therefore on $\Lie(B) = H^0(X,\Omega_X)$.
Conjugating $h$ by the cyclic permutations of $(x,y,z)$, we see that $h$ is conjugate to $h^2$ and $h^4$
in $\Aut(X)$, and therefore the type of $h$ is invariant under multiplication by $2$ (mod $1$).
It is therefore $(1/7,2/7,4/7)$ or $(3/7,5/7,6/7)$, and
replacing $h$ by $h^{-1}$ if necessary, we may assume that it is the former.
\end{proof}

We remark that $B/\langle h\rangle$ in the proof of Corollary~\ref{seven} has appeared
in the literature; it is known to have  a Calabi-Yau resolution  \cite[Example~6.3]{BT2}.

\begin{cor}\label{fifteen}
If $A$ is an abelian variety and $g$ an automorphism such that the type of $A$ is contained in $(1/15,2/15,3/15,4/15,8/15,9/15)$,
then $A/\langle g\rangle$ has a rational curve.
\end{cor}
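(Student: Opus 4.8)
The plan is to exhibit a curve and automorphism satisfying the hypotheses of Theorem~\ref{disjoint}, exactly in the spirit of Corollaries~\ref{seven} and~\ref{mult-one}. I would take $X$ to be the smooth projective model of the (irreducible, since $x$ occurs to the first power) affine curve
$$y^{15} = x(1-x)^2,$$
a cyclic cover of $\bbp^1$ of degree $15$ carrying the order-$15$ automorphism $h(x,y) = (x,\,e(1/15)\,y)$, and set $B = \Jac(X)$. Since the field of $h$-invariant functions is $\bbc(x)$, the quotient $X/\langle h\rangle$ is the $x$-line $\bbp^1$. The point $P_0$ over $x=0$ is totally ramified, hence $h$-fixed, so the Abel--Jacobi embedding $X\hookrightarrow B$ based at $P_0$ is $h$-equivariant and induces a non-constant map from the rational curve $X/\langle h\rangle = \bbp^1$ to $B/\langle h\rangle$. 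Thus $B/\langle h\rangle$ contains a rational curve.

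It then remains to compute the type of $h$ on $\Lie(B) = H^0(X,\Omega_X)$ and to verify disjointness. I claim the six forms
$$\frac{(1-x)\,dx}{y^{14}},\quad \frac{(1-x)\,dx}{y^{13}},\quad \frac{(1-x)\,dx}{y^{12}},\quad \frac{(1-x)\,dx}{y^{11}},\quad \frac{dx}{y^{7}},\quad \frac{dx}{y^{6}}$$
form a basis of $H^0(X,\Omega_X)$, on which $h$ acts by $e(1/15),e(2/15),e(3/15),e(4/15),e(8/15),e(9/15)$ respectively, so that the type of $h$ is exactly $(1/15,2/15,3/15,4/15,8/15,9/15)$. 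Granting this, the type of $h^{-1}$ is $(6/15,7/15,11/15,12/15,13/15,14/15)$, which is disjoint from the type of $h$; and since the type of $g$ is contained in that of $h$, Theorem~\ref{disjoint} produces a rational curve on $A/\langle g\rangle$.

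The one genuine computation, and the main obstacle, is the verification that these six forms are holomorphic and span. The subtlety is the ramification over $\infty$: because $\gcd(3,15)=3$ the cover is not totally ramified there but has three points $P_\infty^{(1)},P_\infty^{(2)},P_\infty^{(3)}$, each of ramification index $5$, so Riemann--Hurwitz gives $2g-2 = -30 + 14 + 14 + 3\cdot 4 = 10$, i.e.\ $g=6$, matching a six-dimensional space of differentials. Writing $P_0,P_1$ for the totally ramified points over $x=0,1$, one computes $\mathrm{div}(y) = P_0 + 2P_1 - \sum_k P_\infty^{(k)}$ and $\mathrm{div}(dx) = 14P_0 + 14P_1 - 6\sum_k P_\infty^{(k)}$; substituting into $\mathrm{div}\bigl(x^i(1-x)^j y^{-t}\,dx\bigr)$ and imposing non-negativity at $P_0$, $P_1$, and the $P_\infty^{(k)}$ shows that for each $t\in\{1,\dots,14\}$ a holomorphic eigendifferential of eigenvalue $e\bigl((15-t)/15\bigr)$ exists precisely when $t\in\{6,7,11,12,13,14\}$, and that the eigenspace is then one-dimensional. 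This case-by-case lattice-point count is the crux; once it is carried out, the disjointness check and the appeal to Theorem~\ref{disjoint} are purely formal.
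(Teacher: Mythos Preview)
Your proposal is correct and follows essentially the same approach as the paper: the paper uses the curve $y^{15}=x^2(x-1)$, which is isomorphic to your $y^{15}=x(1-x)^2$ via $(x,y)\mapsto(1-x,-y)$, and computes the same six eigenvalues $e(1/15),e(2/15),e(3/15),e(4/15),e(8/15),e(9/15)$ on $H^0(X,\Omega_X)$ before invoking Theorem~\ref{disjoint}. The only cosmetic difference is that the paper writes its basis of holomorphic differentials in terms of $dy$ (using $15y^{14}\,dy=(3x^2-2x)\,dx$) and checks the genus via Riemann--Hurwitz for the $y$-projection, whereas you work with $dx$ and divisor computations; your divisor calculations for $y$, $dx$, and the six forms are all correct.
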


\begin{proof}
Let $X$ be the non-singular projective curve which has a (singular, affine) model $X'\colon y^{15} = x^2(x-1)$.  This is singular only at $(0,0)$, and the inverse image of this singularity
under the normalization map $X\setminus \{P_\infty\}\to X'$ is a single point $P_0\in X$.
The automorphism $h\colon (x,y)\mapsto (x,e(1/15)y)$ of the affine curve induces an automorphism of $X$ of order $15$.
As $15y^{14}dy = (3x^2-2x)dx$, any differential form $\frac {x^m y^n  dy}{3x^2-2x}$, $m,n\ge 0$, is holomorphic except possibly at $P_0$ and $P_\infty$.
One checks that
$$\frac{dy}{3x-2}, \frac{ydy}{3x-2}, \frac{y^2dy}{3x-2}, \frac{y^3dy}{3x-2},  \frac{y^7dy}{3x^2-2x},   \frac{y^8dy}{3x^2-2x}$$
are all holomorphic, and their eigenvalues under $h$ are $e(1/15)$, $e(2/15)$, $e(3/15)$, $e(4/15)$, $e(8/15)$, $e(9/15)$ respectively.
Applying the Riemann-Hurwitz theorem to the map $X\to \bbp^1$ given by $y$, we see that $X$ is of genus $6$, and therefore, that these differential
forms form a basis of $\Lie(\Jac(B)) = H^0(X,\Omega_X)$.
\end{proof}

\begin{thm}
\label{small-factor}
If $A$ is an abelian variety and $g$ is an automorphism of finite order such that $g$ and $g^{-1}$ have disjoint types and the type of $g$ is a sum
of primitive types at least one of which has $weight$ less than $1$, then $A/\langle g\rangle$ has a rational curve.
\end{thm}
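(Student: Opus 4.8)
The plan is to reduce to Theorem~\ref{disjoint} by building an explicit ``model'' abelian variety $B$ with automorphism $h$ of the \emph{same} type as $g$, on which a rational curve can be exhibited directly. Write the type of $g$ as a sum $\tau=\tau_1+\cdots+\tau_r$ of primitive types, indexed so that $\tau_1$ has weight $w_1<1$. Since $\tau_1$ is primitive and nonzero, every element lies in $(0,1)$, so $w_1>0$ and hence $0<w_1<1$. Note also that disjointness of the types of $g$ and $g^{-1}$ forces $1/2\notin\tau$, because $1/2$ is its own negative in $\bbq/\bbz$; consequently no $\tau_i$ is the denominator-$2$ type, and each $\tau_i$ has denominator $n_i\ge 3$.

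First I would realize each primitive $\tau_i$ by an abelian variety. By the characterization recalled before Lemma~\ref{primineq}, a primitive type of denominator $n_i\ge 3$ selects, for each $a$ prime to $n_i$, exactly one of $a/n_i$ and $1-a/n_i$; that is, $\tau_i$ is precisely a CM-type for the cyclotomic field $\bbq(\zeta_{n_i})$. Embedding $\mathcal{O}_{\bbq(\zeta_{n_i})}$ into $\bbc^{\phi(n_i)/2}$ through the embeddings indexed by $\tau_i$ yields an abelian variety $B_i$ on which multiplication by $\zeta_{n_i}$ is an automorphism $h_i$ acting on $\Lie(B_i)$ diagonally with eigenvalues $\{e(a/n_i) : a/n_i\in\tau_i\}$. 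Thus $h_i$ has type exactly $\tau_i$, and $h_i$, $h_i^{-1}$ have disjoint types.

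Next I would produce the rational curve and transfer it. Because $0<\age(h_1)=w_1<1$, the main theorem of \cite{KL} shows that $B_1/\langle h_1\rangle$ is uniruled and in particular contains a rational curve. Setting $B=B_1\times\cdots\times B_r$ and $h=h_1\times\cdots\times h_r$, the subvariety $B_1\times\{0\}$ is $\langle h\rangle$-stable, and the image of $\langle h\rangle$ in $\Aut(B_1\times\{0\})$ is $\langle h_1\rangle$; hence the image of $B_1\times\{0\}$ under the quotient map $B\to B/\langle h\rangle$ is isomorphic to $B_1/\langle h_1\rangle$ and so carries a rational curve. Therefore $B/\langle h\rangle$ contains a rational curve. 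Finally, the type of $h$ equals $\tau$, which is the type of $g$, so the containment hypothesis of Theorem~\ref{disjoint} holds with equality; and since the types of $g$ and $g^{-1}$ are disjoint, so are those of $h$ and $h^{-1}$. Theorem~\ref{disjoint} then transfers the rational curve from $B/\langle h\rangle$ to $A/\langle g\rangle$.

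The one step that genuinely requires care is the passage from the single small-weight factor $B_1/\langle h_1\rangle$ to the product quotient $B/\langle h\rangle$: one must verify that the induced morphism $B_1/\langle h_1\rangle\to B/\langle h\rangle$ is injective, so that the image of a nonconstant map $\bbp^1\to B_1/\langle h_1\rangle$ is a genuine (rational) curve and not a point. This is where I expect the main, if modest, obstacle to lie; it follows because two points $(b,0,\dots,0)$ and $(b',0,\dots,0)$ lie in a common $\langle h\rangle$-orbit only when $b'=h_1^k b$ for some $k$. Everything else is formal once the CM realization of the $\tau_i$ and the uniruledness result of \cite{KL} are in hand.
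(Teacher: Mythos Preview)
Your argument is correct and follows the same strategy as the paper: realize each primitive summand by a CM abelian variety, invoke \cite{KL} on the factor of weight $<1$, pass to the product quotient, and then apply Theorem~\ref{disjoint}. You are a bit more explicit than the paper in ruling out the denominator-$2$ primitive type via the disjointness hypothesis, and your injectivity concern at the end, while harmless, is unnecessary since the map $B_1/\langle h_1\rangle\to B/\langle h\rangle$ is finite, so the image of a rational curve is automatically a rational curve.
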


\begin{proof}
For every primitive type, there exists an abelian variety $B_i$ with complex multiplication and an automorphism $h_i$ of $B_i$ with the given type.
Indeed, the primitive types of denominator $n$ are in natural correspondence with CM-types on $\bbq(\zeta_n) = \bbq(e(1/n))$.  Any CM-type $\Phi$ on $\bbq(\zeta_n)$, $n\ge 3$
defines an embedding $\bbq(\zeta_n) \to \bbc^{\phi(n)/2}$.  The image of $\bbz[\zeta_n]$ defines a lattice $\Lambda\subset \bbc^{\phi(n)/2}$, and the quotient $\bbc^{\phi(n)/2}/\Lambda$
is a complex torus with a natural action of $\bbz/n\bbz$ of the type associated with $\Phi$.  The quotient $\bbc^{\phi(n)/2}/\Lambda$ admits a polarization \cite[II~6~Theorem 4]{S}, so there
exists a pair $(B_i,h_i)$ as claimed.  If $\age(h_1) < 1$, then by \cite{KL}, $B_1/\langle g_1\rangle$ has a rational curve.
If $A = A_1\times\cdots\times A_m$, and $g = (g_1,\ldots,g_m)$ is a finite order automorphism of $A$ which stabilizes each factor, then $A_1/\langle g_1\rangle \subset A/\langle g\rangle$,
so $A/\langle g\rangle$ has a rational curve.  The theorem now follows from Theorem~\ref{disjoint}.
\end{proof}

\

To summarize, we have the following theorem:

\begin{thm}\label{main2}
Let $A$ be an abelian variety with a nontrivial automorphism $g$ of finite order such that $\age(g)\le 1$.  Then $A/\langle g\rangle$ contains a rational curve.
\end{thm}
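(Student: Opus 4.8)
The plan is to reduce the theorem to the finite case analysis tabulated in Table~2 and then dispatch the thirty-five cases using the results already established. First I would dispose of the strict inequality $\age(g) < 1$. Since $g$ is a nontrivial automorphism of finite order, it acts nontrivially and semisimply on $\Lie(A)$, so not all of its eigenvalues equal $1$ and hence $\age(g) > 0$; the main theorem of \cite{KL} then shows $A/\langle g\rangle$ is uniruled, and in particular contains a rational curve. This confines the remaining work to the boundary case $\age(g) = 1$.

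For $\age(g) = 1$ I would invoke Proposition~\ref{reduce} to pass to the reduced type: it suffices to find a rational curve on $B/\langle h\rangle$ for every pair $(B,h)$ whose type is the reduced type of $g$. Discarding zeros leaves the weight unchanged, so this reduced type again has weight $1$, and by the classification of age-$1$ types (Table~2) it is one of the thirty-five listed multisets. The proof thus becomes a walk through the ``notes'' column of Table~2.

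These entries split into base cases and reductions. The base cases are handled directly: the type $(1/2,1/2)$ by Proposition~\ref{kummer} and the type $(1/3,2/3)$ by Proposition~\ref{cube} (both via the associated K3 surface of large Picard number); the types that are sums of primitive types at least one of weight $<1$ by Theorem~\ref{small-factor}; and the sporadic types $(1/7,2/7,4/7)$, $(1/15,2/15,4/15,8/15)$, and the three maximal multiplicity-one types by Corollaries~\ref{seven}, \ref{fifteen}, and \ref{mult-one}, which build the needed equivariant map out of the Klein quartic, the curve $y^{15}=x^2(x-1)$, and the hyperelliptic curves $y^2 = x^n-1$. For the applications of Theorem~\ref{small-factor} and of the three corollaries one must verify the disjointness or containment hypotheses of Theorem~\ref{disjoint}; this is immediate because none of the types involved contains $0$ or $1/2$, the only classes with $x\equiv -x \pmod 1$. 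Every remaining entry carries a note ``$g^k\to\#j$'': here the reduced type of the power $g^k$ coincides with a base case $\#j$, so Proposition~\ref{reduce} applied to $g^k$ yields a rational curve on $A/\langle g^k\rangle$, and the finite morphism $A/\langle g^k\rangle \to A/\langle g\rangle$ pushes it forward to $A/\langle g\rangle$.

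The hard part is not any single construction but ensuring completeness and the absence of circularity in this bookkeeping. I would need to confirm that Table~2 genuinely exhausts the reduced types of weight $1$ --- this is exactly what Lemma~\ref{primineq} and Proposition~\ref{primlist} supply, by bounding the denominator and then searching exhaustively --- and that each reduction ``$g^k\to\#j$'' is correct, which is a routine eigenvalue computation (multiply each $x_i$ by $k$ modulo $1$ and discard integers). Since the only reduction targets that occur are the base cases $\#1$, $\#2$, and $\#3$, the recursion terminates and the argument is well-founded.
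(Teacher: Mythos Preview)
Your proposal is correct and follows exactly the paper's approach: Theorem~\ref{main2} is stated in the paper as a summary of the preceding case analysis, and you have accurately reconstructed how the pieces (the \cite{KL} result for $\age(g)<1$, Proposition~\ref{reduce}, the classification in Table~2, the base cases via Propositions~\ref{kummer}--\ref{cube}, Theorem~\ref{small-factor}, Corollaries~\ref{mult-one}--\ref{fifteen}, and the power-reduction lemma) assemble into a proof. Your remarks on well-foundedness and on the disjointness hypothesis are apt and make explicit checks the paper leaves implicit.
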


\begin{cor}\label{hyper} Let $A$ be an abelian variety of dimension $n$ with a nontrivial automorphism $g$ of finite order. If $\dim(\ker(1-g))\geq n-2$ (i.e. the codimension of the fixed subspace of $A$ under $g$ is less than or equal to $2$), then the quotient $A/\langle g\rangle$ contains a uniruled hypersurface.
\end{cor}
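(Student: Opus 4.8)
The plan is to split the action into its fixed part and its (at most two-dimensional) moving part, produce a single rational curve coming from the moving part, and then sweep out a hypersurface by translating that curve by the abelian variety of fixed directions.

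Write $c = n - \dim_\bbc \ker(1-g\mid\Lie(A))$ for the codimension of the fixed subspace, so that $1\le c\le 2$ (here $c\ge 1$ because $g$ is nontrivial). Following the proof of Proposition~\ref{reduce}, set $B=\I(1-g)\subseteq A$, an abelian subvariety of dimension $c$ on which $g$ restricts to an automorphism $h$ whose type is the reduced type of $g$; and set $A_0=\ker(1-g)^0$, an abelian subvariety of dimension $n-c$. Since $A_0\subseteq\ker(1-g)$, translation by any element of $A_0$ commutes with $g$ and hence descends to an automorphism of $A/\langle g\rangle$; thus $A_0$ acts on $A/\langle g\rangle$ by translation, and the quotient by this action is $C/\langle\bar g\rangle$, where $C=A/A_0$ and $\bar g$ is induced by $g$. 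I will write $\pi\colon A/\langle g\rangle\to C/\langle\bar g\rangle$ for the quotient map; its fibers are $A_0$-orbits, hence abelian varieties, and so contain no rational curve.

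First I would produce one rational curve $R$ in $A/\langle g\rangle$, and by the proof of Proposition~\ref{reduce} it suffices to find one in $B/\langle h\rangle$. If $c=1$, then $B$ is an elliptic curve and $h$ a nontrivial automorphism fixing the origin, so $B/\langle h\rangle\cong\bbp^1$ is itself a rational curve. If $c=2$, then $B$ is an abelian surface and the reduced type of $h$ consists of two nonzero fractions $x_1,x_2$; since $h^{-1}$ has type $\{1-x_1,1-x_2\}$ we get $\age(h)+\age(h^{-1})=2$, so one of the two ages is $\le 1$. Replacing $h$ by $h^{-1}$ if necessary, which changes neither $\langle h\rangle$ nor the quotient, I may assume $\age(h)\le 1$ and apply Theorem~\ref{main2} to $(B,h)$ to obtain a rational curve in $B/\langle h\rangle$. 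In either case $R\subseteq B/\langle h\rangle\subseteq A/\langle g\rangle$.

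Next I would sweep out the hypersurface. Assuming $n\ge 2$ (the case $n=1$ being the classical $A/\langle g\rangle\cong\bbp^1$), choose an irreducible subvariety $T\subseteq A_0$ of dimension $n-2$: take $T=A_0$ when $c=2$, and take $T$ an irreducible divisor in $A_0$ when $c=1$, which exists since then $\dim A_0=n-1\ge 1$. Let $H$ be the image of $T\times R\to A/\langle g\rangle$, $(t,r)\mapsto t+r$. Through each point of $H$ passes a translate $t+R$, which is the image of $R$ under the translation automorphism by $t\in A_0$ and hence again a rational curve; thus $H$ is covered by rational curves and is uniruled. For the dimension, since the fibers of $\pi$ carry no rational curve, $R$ is not contained in a fiber, so $\pi|_R\colon R\to R_0:=\pi(R)$ is a nonconstant, hence finite, map of curves. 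As $\pi$ kills the $A_0$-translations, $\pi(H)=R_0$; and the fiber of $H\to R_0$ over a general point $p$ is the union, over the finitely many $r\in R$ with $\pi(r)=p$, of the $A_0$-orbits $\{t+r:t\in T\}$, each of dimension $n-2$. Hence $\dim H=(n-2)+1=n-1$, and $H$ is the desired uniruled hypersurface.

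The hard part is the case $c=2$ with $\age(g)>1$, where Theorem~\ref{main2} does not apply to $(A,g)$ directly; this is precisely what the symmetry $\age(h)+\age(h^{-1})=2$ together with $\langle h\rangle=\langle h^{-1}\rangle$ circumvents, by passing to the inverse and landing in the range $\age\le 1$. The remaining point requiring care is the dimension count, namely that translating $R$ genuinely sweeps out an $(n-1)$-fold rather than collapsing; this is guaranteed by the fact that the fibers of $\pi$ are abelian varieties and therefore force $R$ to be transverse to the fibration.
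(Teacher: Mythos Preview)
Your argument is correct and follows essentially the same route as the paper: split off $B=\I(1-g)$ with $h=g|_B$, use $\age(h)+\age(h^{-1})\le 2$ to land in the range of Theorem~\ref{main2}, and then sweep the resulting rational curve by the fixed abelian subvariety $A_0=\ker(1-g)^0$. The paper phrases the last step via the isogeny $B\times A_0\to A$ and the finite map $(B/\langle h\rangle)\times A_0\to A/\langle g\rangle$, whereas you use the translation action of $A_0$ on $A/\langle g\rangle$ together with the quotient $\pi$ to control dimensions; your version is slightly more careful in the $c=1$ case, where you take a divisor $T\subset A_0$ so that the sweep has dimension exactly $n-1$.
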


\begin{proof} Since $\dim(\ker(1-g))\geq n-2$, $B:=\mathrm{im}(1-g)$ is an abelian variety of dimension $n-\dim (\ker (1-g))\le 2$.
Let $h$ denote the restriction of $g$ to $B$.  As $\age(h)+\age(h^{-1}) \ge 2$, we may assume without loss of generality that $\age(h)\le 1$,
so $B/\langle h\rangle$ has a rational curve $Z$ by Theorem~\ref{main2}.
  Let $C$ denote the identity component of $\ker (1-g)$, which is an abelian subvariety of dimension $\dim \ker(1-g)$ on which $g$ acts trivially.  The addition morphism $B\times C\to A$ is an isogeny and respects the action of  $\langle g\rangle$.  We therefore obtain a finite morphism from
$$Z\times C \subset (B/\langle g\rangle)\times C \cong (B\times C)/\langle g\rangle$$
to $A/\langle g\rangle$.  The image of an $n-1$-dimensional ruled variety under a finite morphism is a uniruled hypersurface.
\end{proof}

\begin{cor} Let $E$ be an elliptic curve. If $W$ is a Weyl group of simple roots of rank $n\geq 3$ acting on $E^n$ and $W^+$ is an index $2$-subgroup of $W$, then the quotient $E^n/W^+$ contains a rational curve.
\end{cor}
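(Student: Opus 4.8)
The plan is to produce a single nontrivial element $g\in W^+$ with $\age(g)\le 1$ and then to invoke Theorem~\ref{main2} together with the finiteness argument already used for quotient maps. Here $W$ acts on $E^n=V/L$ through its reflection representation, where $V=\Lie(E^n)$ is the complexification of the (rational) reflection representation on $\bbr^n$ and $L$ is the corresponding lattice; thus for $g\in W$ the eigenvalues of $g$ on $V$ are exactly those of $g$ in the reflection representation.

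The first step is an age formula. Because the reflection representation is defined over $\bbq$, the characteristic polynomial of any $g\in W$ acting on $V$ has real coefficients, so its eigenvalues split into conjugate pairs $\{e(x),e(-x)\}$ with $0<x<1/2$, together with some eigenvalues equal to $1$ and some equal to $-1$. Each conjugate pair contributes $x+(1-x)=1$ to the age, each eigenvalue $-1$ contributes $1/2$, and each eigenvalue $1$ contributes $0$, while the codimension of the fixed space counts a pair twice, an eigenvalue $-1$ once, and an eigenvalue $1$ not at all. Hence
$$\age(g)=\tfrac12\bigl(n-\dim\ker(1-g)\bigr),$$
so that $\age(g)\le 1$ if and only if $\dim\ker(1-g)\ge n-2$.

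The second step is to locate such a $g$ inside $W^+$. Write $W^+=\ker\chi$ for a nontrivial homomorphism $\chi\colon W\to\{\pm1\}$, and recall that the determinant character $g\mapsto\det(g\mid V)$ takes the value $-1$ on every reflection. Since reflections generate $W$ and $\chi$ is nontrivial, $\chi(s)=-1$ for at least one reflection $s$. If some reflection $s$ satisfies $\chi(s)=1$, then $s\in W^+$ is nontrivial with $\dim\ker(1-s)=n-1$, so $\age(s)=1/2$ and we take $g=s$. Otherwise $\chi$ equals $-1$ on every reflection; as reflections generate $W$, a character of $W$ is determined by its values on them, so $\chi=\det$ and $W^+=\ker\det$ is the rotation subgroup. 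Choosing two non-proportional roots $\alpha,\beta$ (possible since $n\ge 3\ge 2$) and setting $g=s_\alpha s_\beta$, we have $\det(g)=1$, hence $g\in W^+$, while $\ker(1-g)=H_\alpha\cap H_\beta$ has dimension $n-2$; thus $g\ne 1$ and $\age(g)=1$.

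In every case $g\in W^+$ is a nontrivial finite-order automorphism of $E^n$ with $\age(g)\le 1$, so by Theorem~\ref{main2} the quotient $E^n/\langle g\rangle$ contains a rational curve. Since $\langle g\rangle\subseteq W^+$, the natural surjection $E^n/\langle g\rangle\to E^n/W^+$ is finite, and as in the first lemma of Section~3 the image of a rational curve under a finite morphism is again a rational curve; hence $E^n/W^+$ contains a rational curve. The only point requiring care is the case analysis on index-$2$ subgroups: one must check that an index-$2$ subgroup which is not the rotation subgroup $\ker\det$ necessarily contains a reflection, which is exactly the uniqueness of $\det$ among characters that are $-1$ on all reflections, and that when $W^+=\ker\det$ one can find a product of two reflections whose fixed space has codimension exactly $2$ — both of which hold once the rank is at least $2$.
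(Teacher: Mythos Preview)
Your proof is correct and follows essentially the same approach as the paper: locate an element $g\in W^+$ that is a product of at most two reflections, so that $\dim\ker(1-g)\ge n-2$, and then invoke the codimension-$2$ criterion. The only cosmetic difference is that the paper appeals directly to Corollary~\ref{hyper}, whereas you unpack the age formula $\age(g)=\tfrac12\bigl(n-\dim\ker(1-g)\bigr)$ and apply Theorem~\ref{main2} by hand; your case split on whether $W^+$ contains a reflection is a slightly more explicit version of the paper's pigeonhole observation that two reflections lie in the same coset of $W^+$.
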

\begin{proof} $W$ is generated by reflections $s_j$ of simple roots. Since $W^+$ is an index $2$-subgroup of $W$, there exist  two reflections $s_1$ and $s_2$ such that $s_1s_2\in W^+$. Then for each $i$, $\ker(1-s_i)$ has codimension $1$ and their intersection has codimension $\leq 2$. Since $\ker(1-s_1s_2)$ contains the intersection of $\ker(1-s_1)$ and $\ker(1-s_2)$,  this follows from Corollary~\ref{hyper}.
\end{proof}

\end{document}